\newtheorem{thm}{Theorem}[section]
\newtheorem{cor}[thm]{Corollary}
\newtheorem{lemma}[thm]{Lemma}
\newtheorem{ex}[thm]{Example}
\newtheorem{prop}[thm]{Proposition}
\theoremstyle{definition}
\newtheorem{remark}[thm]{Remark}
\theoremstyle{question}
\theoremstyle{Conjecture}
\numberwithin{equation}{section}
\begin{document}

\title[ criteria for nilpotency of groups via  partitions]{ criteria for nilpotency of  groups via  partitions}%
\author{L. J. Taghvasani  and M. Zarrin}%

\address{Department of Mathematics,  University of Kurdistan, P.O. Box: 416, Sanandaj, Iran}
\email{L.jafari@sci.uok.ac.ir}
\address{Department of Mathematics, University of Kurdistan, P.O. Box: 416, Sanandaj, Iran}
 \email{Zarrin@ipm.ir}
\begin{abstract}
Let $G$ be a group and $T< G$. A set $\Pi=\{H_1, H_2, \dots , H_n\}$ of proper subgroups of $G$ is said to be a $\mathit{strict}$ $\mathit{T}$-$\mathit{partition}$ of $G$, if $G=\cup _{i=1} ^{n} H_i$ and $H_i \cap H_j=T$ for every $1\leq i, j \leq n$. If $\Pi$ is a strict $T$-partition of $G$ and the orders of all components of $\Pi$ are equal, then we say that $G$ has an $ET$-partition. Here we show that:
A finite group $G$ is nilpotent if and only if every subgroup $H$ of $G$ has  an ES-partition, for  some $S \leq H$.\\\\
{\bf Keywords}.
  Partitions, Nilpotent groups, Schmidt groups.\\
{\bf Mathematics Subject Classification (2010)}. 20D15, 20E34.
\end{abstract}
\maketitle

\section{Introduction}

Let $G$ be a finite group and $S< G$. A cover for a group $G$ is a collection of subgroups of $G$ whose union is $G$.  We use the term $n$-cover for a cover with $n$ members.
A cover $\Pi =\{H_1, H_2, \dots, H_n\}$ is said to be a  strict $\mathit{S}$-partition of $G$ if $H_i\cap H_j= S$ for $i\neq j$ and $\Pi$ is said an equal strict $\mathit{S}$-partition (or $ES$-partition ) of $G$, if $\Pi$ is a strict $S$-partition and $|H_i|=|H_j|$ for all $i\neq j$. If $S$ is the identity subgroup and $G$ has a strict $S$-partition (equal strict $\mathit{S}$-partition), then we say that $G$ has a partition (equally partition, resp.).

In 1906 Miller pioneered the study of  finite groups with a partition. Subsequently in 1961, Baer, Kegel and Suzuki completed the classification of partitions of finite groups, as follows.

\begin{thm} \cite{baer, suzuki, kegel}\label{class}
Let $G$ be a finite group. Then $G$ admits a nontrivial partition if and only if $G$ is isomorphic with one of the following groups.

 \begin{description}

 \item $G$ is a $p$-group with $H_p(G)\neq G$ and $|G|> p$, where $H_p(G)$ is the Hughes subgroup of $G$. That is the subgroup generated by all the elements of $G$ whose order is not $p$;
 \item  $G$ is a Frobenius group;
 \item $G$ is a group of Hughes-Thompson type, that is a non $p$-group with $H_p(G)\neq G$ for some prime $p$;
 \item  $G$ is isomorphic with $PGL(2, p^h)$, p is an odd prime;
 \item $G$ is isomorphic with $PSL(2, p^h)$, p is a prime;
 \item $G$ is isomorphic with a Suzuki group $G(q)$, $q=2^h$, $h> 1$.

 \end{description}

\end{thm}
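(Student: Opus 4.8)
The plan is to prove both implications, with all the substance concentrated in a single lemma: a \emph{Schmidt group} (a minimal non-nilpotent finite group, i.e.\ a non-nilpotent group all of whose proper subgroups are nilpotent) admits no ES-partition. Since a cyclic group cannot be written as a union of proper subgroups (its generators lie in no proper subgroup), I adopt the standard convention that cyclic groups possess the trivial partition; with this understood, the content of the theorem lies entirely in the non-cyclic case, and no use of the classification in Theorem \ref{class} is needed.

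For the implication ``every subgroup of $G$ has an ES-partition $\Rightarrow G$ nilpotent'', I argue by contraposition. If $G$ is not nilpotent, I choose a subgroup $K$ minimal among the non-nilpotent subgroups of $G$. Every proper subgroup of $K$ is then nilpotent while $K$ is not, so $K$ is a Schmidt group; in particular $K$ is non-cyclic, so the convention grants it nothing. By the Lemma, $K$ has no ES-partition, contradicting the hypothesis. This is the easy direction once the Lemma is available.

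The heart of the matter is the Lemma. I would use the classical structure of a Schmidt group $K$: $|K|=p^{a}q^{b}$ for distinct primes $p,q$, the Sylow $p$-subgroup $P$ is normal, the Sylow $q$-subgroup $Q$ is cyclic, $K/P\cong Q$, and $P/\Phi(P)$ is a chief factor on which $Q$ acts irreducibly and nontrivially, so that $C_{P/\Phi(P)}(Q)=0$ and hence $C_P(Q_0)\le\Phi(P)$ for every conjugate $Q_0$ of $Q$. Suppose, for contradiction, that $\{H_1,\dots,H_n\}$ is an ES-partition of $K$ with $|H_i|=m$ and $H_i\cap H_j=S$. Each $H_i$ is proper, hence nilpotent, so $H_i=P_i\times Q_i$ with $P_i\le P$ its Sylow $p$-subgroup (as $P$ is the unique Sylow $p$-subgroup of $K$) and $Q_i$ a $q$-subgroup; thus $H_i\cap P=P_i$, and writing $m=p^{c}q^{d}$ all the $Q_i$ share the order $q^{d}$. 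Projecting along $\pi\colon K\to K/P\cong Q$, a cyclic $q$-group, gives $\pi(H_i)\cong Q_i$ of order $q^{d}$; since a cyclic group has a unique subgroup of each order, all $\pi(H_i)$ coincide. But $\bigcup_i H_i=K$ forces $\bigcup_i\pi(H_i)=K/P$, so this common subgroup is all of $K/P$, i.e.\ $d=b$ and each $Q_i$ is a full Sylow $q$-subgroup of $K$. Nilpotency of $H_i$ gives $[P_i,Q_i]=1$, whence $P_i\le C_P(Q_i)\le\Phi(P)$. Therefore $P\cap\bigcup_i H_i=\bigcup_i P_i\le\Phi(P)\subsetneq P$, contradicting $P\subseteq K=\bigcup_i H_i$. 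The decisive and delicate step — and exactly where the ``equal'' hypothesis is indispensable — is the projection: equality of orders makes every $\pi(H_i)$ have the \emph{same} order, and uniqueness of subgroups in the cyclic quotient $K/P$ then forces each component to absorb a whole Sylow $q$-subgroup; a merely strict (unequal) $S$-partition, such as the Frobenius partition of $S_3$, escapes this conclusion.

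For the converse, let $G$ be nilpotent and $H\le G$; then $H$ is nilpotent, so $H=P_1\times\cdots\times P_k$ is the direct product of its Sylow subgroups. If $H$ is cyclic we are done by convention, so assume $H$ is non-cyclic; then some factor $P_j$ is a non-cyclic $p$-group, and $P_j/\Phi(P_j)$ is elementary abelian of rank $\ge 2$, so there is a normal subgroup $N\trianglelefteq P_j$ with $P_j/N\cong\mathbb{Z}_p\times\mathbb{Z}_p$. Pulling back the $p+1$ subgroups of order $p$ of $\mathbb{Z}_p\times\mathbb{Z}_p$ yields $K_1,\dots,K_{p+1}\le P_j$, each of order $p|N|$, with $K_s\cap K_t=N$ for $s\ne t$ and $\bigcup_t K_t=P_j$; this is an ES-partition of $P_j$. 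Finally $\{\,K_t\times\prod_{l\ne j}P_l\,\}_{t}$ is an ES-partition of $H$ with common intersection $N\times\prod_{l\ne j}P_l$. I expect the main obstacle to be the Lemma and not this construction: the real difficulty is excluding ES-partitions whose components are not all of full Sylow $q$-type, which is precisely what the cyclic-quotient projection resolves.
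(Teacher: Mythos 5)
You have proved the wrong statement. The theorem under review is Theorem \ref{class}, the Baer--Kegel--Suzuki classification of all finite groups admitting a nontrivial partition, and your proposal never engages with its content: there is no analysis of the Hughes subgroup $H_p(G)$, no treatment of Frobenius groups or groups of Hughes--Thompson type, and no identification of $PGL(2,p^h)$, $PSL(2,p^h)$ or the Suzuki groups $G(q)$. What you prove instead is the paper's main nilpotency criterion (Theorem \ref{T1}, i.e.\ Lemma \ref{lr2}): a finite group is nilpotent if and only if every subgroup admits an $ES$-partition. Note that the paper does not prove Theorem \ref{class} at all --- it is quoted from \cite{baer, suzuki, kegel} --- and it could not be reached by your toolkit even in principle: the hard direction requires classifying the simple groups that admit a partition (Suzuki's work on groups with a partition and on Zassenhaus groups), which lies far beyond Schmidt-group structure theory. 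Moreover, the partitions classified in Theorem \ref{class} are \emph{not} assumed to have components of equal order --- the natural partition of $PSL(2,q)$, say, mixes Sylow $p$-subgroups with cyclic tori of orders dividing $(q\pm 1)/\gcd(2,q-1)$ --- so your central device, the cyclic-quotient projection that exploits equality of orders to force each component to absorb a full Sylow $q$-subgroup, is simply unavailable for the objects this theorem is about. Your opening remark that ``no use of the classification in Theorem \ref{class} is needed'' makes the misidentification explicit: you treated the classification as background and proved a different theorem.

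For what it is worth, read as a blind proof of Theorem \ref{T1}/Lemma \ref{lr2} your argument appears sound, and it takes a genuinely different route from the paper's: where the paper proves $S \unlhd G$ via Zappa's Theorems \ref{zapa1} and \ref{zapa2} together with Isaacs' theorem, and then rules out $P \leq S$ and $Q \leq S$ by order considerations, you argue directly from the Schmidt structure that $C_P(Q^g) \leq \Phi(P)$ for every conjugate of $Q$, so once equal orders and the uniqueness of subgroups in the cyclic quotient $K/P$ force every component $H_i = P_i \times Q_i$ to contain a full Sylow $q$-subgroup, all the $p$-parts $P_i$ land in $\Phi(P)$ and cannot cover $P$; your converse construction (pulling back the $p+1$ lines of a $Z_p\times Z_p$ quotient of a non-cyclic Sylow subgroup and crossing with the remaining Sylow factors) likewise avoids Isaacs' theorem, which the paper leans on. That is a tidy and arguably more self-contained proof of the nilpotency criterion --- but it establishes none of the six alternatives in the statement you were asked to prove, and no repair short of an entirely different argument would close that gap.
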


In 1966  \cite{isaacs}, Isaacs proved that the $\mathbf{only}$ finite groups which can be partitioned by subgroups of equal orders (or,  only finite groups which has an equally partition) are the finit $p$-groups of exponent $p$ (we will mention this result in throughout the paper as Isaacs' Theorem).
Zappa in \cite{zappa2},  reduced the problem of determining the strict $S$-partitions to the analogous problem for partitions. In fact, he investigated finite groups with a strict $S$-partition, where $S$ is an antinormal subgroup of $G$. We say that a subgroup $T$ is antinormal in $G$, when no normal subgroup $1\neq N$ of $G$ is contained in $T$.

Here  we  give a new characterization of finite non-cyclic  nilpotent groups by $ES$-partitions.

\begin{thm} \label{T1}
For any  finite non-cyclic group $G$, the following statements are equivalent.

 \begin{itemize}

  \item  $G$ is a nilpotent group;
   \item  $G$ has an $ES$-partition  such that  $S\unlhd G$ and $S\leq Z_i(G)$, for some $i\in \mathbb{N}$;
 \item  Every subgroup $H$ of $G$ has  an $ES$-partition, for  some $S \leq H$.

 \end{itemize}

\end{thm}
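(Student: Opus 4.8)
I would establish the two cycles $(1)\Rightarrow(2)\Rightarrow(1)$ and $(1)\Rightarrow(3)\Rightarrow(1)$. A preliminary observation simplifies matters: if $G$ is nilpotent of class $c$, then $Z_c(G)=G$, so the clause ``$S\le Z_i(G)$ for some $i$'' in statement $(2)$ is automatic for \emph{any} normal $S$; thus, when $G$ is nilpotent, $(2)$ amounts to the existence of an $ES$-partition with $S\unlhd G$. For $(1)\Rightarrow(2)$, since $G$ is non-cyclic nilpotent it has a non-cyclic Sylow $p$-subgroup $P$; writing $G=P\times Q$ with $Q$ the Hall $p'$-subgroup, set $S=\Phi(P)\times Q\unlhd G$, so that $G/S\cong P/\Phi(P)$ is elementary abelian of rank $\ge 2$. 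Partitioning $P/\Phi(P)$ into its subgroups of order $p$ gives an equal partition of $G/S$, and taking preimages under $G\to G/S$ yields proper subgroups $H_i$ with $H_i\cap H_j=S$ and $|H_i|=|H_j|$, i.e. an $ES$-partition with $S\unlhd G$ and $S\le Z_c(G)=G$.

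\textbf{The converse $(2)\Rightarrow(1)$.} Given an $ES$-partition $\{H_1,\dots,H_n\}$ with intersection $S\unlhd G$ and $S\le Z_i(G)$, I would pass to $G/S$. Because $S$ is normal, $\{H_1/S,\dots,H_n/S\}$ is a partition of $G/S$ with trivial pairwise intersections and equal component orders $|H_i|/|S|$, hence an equally partition. By Isaacs' Theorem, $G/S$ is a $p$-group of exponent $p$, in particular nilpotent of some class $c$, so $\gamma_{c+1}(G)\le S\le Z_i(G)$. The standard hypercentre identity then gives $\gamma_{c+i+1}(G)\le[\,Z_i(G),G,\ldots,G\,]=1$ (with $G$ repeated $i$ times), whence $G$ is nilpotent.

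\textbf{The equivalence with $(3)$.} For $(1)\Rightarrow(3)$, every subgroup $H$ of the nilpotent group $G$ is nilpotent; applying $(1)\Rightarrow(2)$ to each non-cyclic $H$ produces the required $ES$-partition, while cyclic subgroups are covered by the convention in force (they carry no genuine cover by proper subgroups). The substantive direction is $(3)\Rightarrow(1)$, which I would prove contrapositively. If $G$ is non-nilpotent, choose a subgroup $H$ of minimal order among non-nilpotent subgroups; then every proper subgroup of $H$ is nilpotent, so $H$ is a Schmidt group, and by Schmidt's theorem $|H|=p^aq^b$ with normal Sylow $p$-subgroup $P$ and non-normal cyclic Sylow $q$-subgroup $Q$. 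By $(3)$, $H$ (being non-cyclic) has a genuine $ES$-partition $\{H_1,\dots,H_n\}$ of proper, hence nilpotent, subgroups, all of common order $c$. Each $H_i$ is a direct product of its Sylow subgroups, so $c=p^{a_i}q^{b_i}$ with $a_i,b_i$ forced to be constant across $i$ by unique factorisation; since some $H_i$ must contain an element of order $q^b$, its $q$-part is a full Sylow $q$-subgroup, and therefore \emph{every} component contains a Sylow $q$-subgroup $Q^{g_i}$ of $H$. As $Q^{g_i}$ is the normal Sylow $q$-subgroup of the nilpotent group $H_i$, we get $H_i\le N_H(Q^{g_i})$, a conjugate of the proper subgroup $N_H(Q)$. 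Consequently $H=\bigcup_i H_i\subseteq\bigcup_{g}N_H(Q)^{g}$, contradicting the classical fact that a finite group is never the union of the conjugates of a single proper subgroup.

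\textbf{Main obstacle.} The delicate point is the Schmidt-group analysis in $(3)\Rightarrow(1)$: a priori $S$ need not be normal, so Isaacs' Theorem is unavailable there, and one must argue purely from the equal-order and nilpotency constraints on the components. The key realisation that removes this difficulty is that equality of orders forces every component to capture a full Sylow $q$-subgroup, after which the conjugacy-cover argument applies without ever using normality of $S$; I expect verifying the precise structural input from Schmidt's theorem (normal Sylow $p$-subgroup, cyclic non-normal Sylow $q$-subgroup, properness of $N_H(Q)$) to be the only place requiring care.
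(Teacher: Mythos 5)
Your proposal is correct, but in the one substantive direction, $(3)\Rightarrow(1)$, it takes a genuinely different route from the paper's. The paper (Lemma \ref{lr2}) also reduces to a Schmidt group, but then analyzes the intersection subgroup $S$ of the hypothesized $ES$-partition of that group: it uses Zappa's Theorems \ref{zapa1} and \ref{zapa2} to rule out $S$ being antinormal (which would make the group Frobenius) and to show $S$ equals its normal core, so that $S\unlhd G$; it then applies Isaacs' Theorem to $G/S$ to force $P\le S$ or $Q\le S$, and eliminates each case by a separate argument. Your proof never touches $S$ or the pairwise-intersection condition at all: equality of orders forces every component to contain a full Sylow $q$-subgroup of the Schmidt subgroup $H$ (the generator of the cyclic $Q$ lies in some component, pinning the common $q$-part at $q^b$), nilpotency of the components places each inside a conjugate of the proper subgroup $N_H(Q)$, and the classical fact that a finite group is never the union of the conjugates of a single proper subgroup finishes the argument. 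This is more elementary --- no Frobenius/antinormal machinery and no Isaacs' Theorem in this direction --- and it proves something strictly stronger: a Schmidt group admits no cover by equal-order nilpotent proper subgroups whatsoever, which also yields the paper's Corollary \ref{co35} immediately. Your other directions essentially parallel the paper: for $(1)\Rightarrow(2),(3)$ your explicit choice $S=\Phi(P)\times Q$ with the lifted partition of $P/\Phi(P)$ into subgroups of order $p$ replaces the paper's combination of Lemma \ref{semi} and Proposition \ref{p32} (which quote Isaacs' Theorem together with Theorem \ref{zapa1}), and your $(2)\Rightarrow(1)$ is the paper's Lemma \ref{lr1} with the hypercentre step $\gamma_{c+i+1}(G)\le [Z_i(G),G,\dots,G]=1$ spelled out where the paper merely writes ``hence $G$ is nilpotent.'' One shared caveat: statement $(3)$ read literally fails for cyclic subgroups, which admit no cover by proper subgroups; the paper's own proof silently restricts attention to non-cyclic subgroups, and you adopt the same convention explicitly, so this is a defect of the statement rather than of your argument.
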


Let $\mathcal{Y}$ be a class of groups. A group $G$ is a minimal non-$\mathcal{Y}$-group (or a $\mathcal{Y}$-critical group), if $G\not\in \mathcal{Y}$ but all proper subgroups of $G$ belong to $\mathcal{Y}$. It is clear that detailed knowledge of the structure of minimal non-$\mathcal{Y}$-groups can provide insight into what makes a group belong to $\mathcal{Y}$. For instance, minimal non-nilpotent groups (called Schmidt groups) were analyzed by Schmidt \cite{schm}, and proved that such groups are solvable (see also \cite{zar}). We say that a group $G$ is a minimal non-$ES$-partition group if $G$ has no $ES$-partition but every proper subgroup $H$ of $G$ has an $ET$-partition, for some $T\leq H$. As a Corollary of the above Theorem we obtain the new characterization for Schmidt groups. In fact we prove that (see Corollary \ref{co35}, below):
  $$\text{A ~~group~~}  G \text{~~is~~ a ~~minimal~~ non-ES-partition group~~if~and~only~if ~}$$  $$G \text{~~is~~ a ~~minimal ~~non-nilpotent ~~ group.}$$

  Finally, in view of  the above result, the following question arises naturally: Assume that $F$ is a nontrivial subgroup of  $G$ such that $G$ has no an  $EF$-partition but every proper subgroup of $G$ has  an $EF$-partition. What we can say about the structure of such groups?  We show that  these groups are  minimal non-cyclic groups (except  $Z_p\times Q_8$ and $Z_3\ltimes Q_8$, where $Q_8$ is the quaternion group, $Z_p$ is a cyclic group of prime order $p$ and $Z_3\ltimes Q_8$ is semidirect product $Q_8$ by $Z_3$).  The classification of minimal non-cyclic groups, is known (see \cite{MM}). More precisely the following holds.

  \begin{thm} \label{T2}
  Assume that $G$ is a group and $F$ is a nontrivial subgroup of  $G$. Then   every proper non-cyclic subgroup of $G$ has  an $EF$-partition if and only if  G is isomorphic to one of the following groups:
  \begin{itemize}

  \item  $Z_p \times Z_p$ with $p$ prime;
   \item  $Q_8$, the quaternion group of order 8;
    \item  $Z_p \times Q_8$ and $Z_3\ltimes Q_8$;
 \item  $ < a, b ~|~ a^p=b^{q^{m}}=1; b^{-1}ab=a^r >$, with $p, q$ distinct primes and $r \not\equiv 1 (mod ~p)$ and $r^q \equiv 1 (mod ~p)$.

 \end{itemize}

\end{thm}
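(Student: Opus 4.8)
The plan is to prove the two implications separately, treating the ``only if'' direction as the substantial one and reading off the ``if'' direction from the subgroup lattices of the listed groups. Throughout, the organizing principle is Isaacs' Theorem applied to quotients by $F$: whenever a subgroup $H$ carries an $EF$-partition with $F\unlhd H$, passing to $H/F$ turns it into an equal partition in the sense of Isaacs, so $H/F$ must be a $p$-group of exponent $p$.

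For the \emph{if} direction I would verify the property for each candidate. The three minimal non-cyclic groups $Z_p\times Z_p$, $Q_8$ and $\langle a,b\mid a^p=b^{q^m}=1,\ b^{-1}ab=a^r\rangle$ have \emph{no} proper non-cyclic subgroup at all, so the requirement holds vacuously for any nontrivial $F$. For $G=Z_p\times Q_8$ (with $p$ an odd prime, the choice $p=2$ being degenerate because then the central Klein four subgroup obstructs) and for $G=Z_3\ltimes Q_8\cong SL(2,3)$ I would check that the unique proper non-cyclic subgroup is a copy of $Q_8$, and exhibit on it the $EF$-partition $\{\langle i\rangle,\langle j\rangle,\langle k\rangle\}$ with $F=Z(Q_8)$ of order $2$: these three cyclic subgroups of order $4$ cover $Q_8$, meet pairwise exactly in $Z(Q_8)$, and have equal order. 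This also pins down the common $F$ that makes the hypothesis work.

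For the \emph{only if} direction, assume $G$ is non-cyclic and that some nontrivial $F$ makes every proper non-cyclic subgroup $EF$-partitioned. If $G$ has no proper non-cyclic subgroup then $G$ is itself minimal non-cyclic, and the Miller--Moreno classification \cite{MM} yields the first, second and fourth families. Otherwise I pick a minimal non-cyclic subgroup $K<G$; it is proper, hence carries an $EF$-partition with $F\neq 1$. The key lemma I would establish is that, among the minimal non-cyclic groups, \emph{only} $Q_8$ admits an $EF$-partition with $F\neq 1$: a direct check shows $Z_p\times Z_p$ and the metacyclic group admit only the trivial equal partition (or none), via the quotient-by-$F$ reduction and Isaacs' Theorem, whereas $Q_8$ survives with $F=Z(Q_8)$ and $|F|=2$. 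Consequently $Q_8\le G$ and $F=\langle z\rangle$, where $z$ is the central involution.

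The decisive structural step is to read off the $2$-local and odd-local structure. Since any subgroup isomorphic to $Z_2\times Z_2$ would be a proper non-cyclic subgroup requiring an $EF$-partition with $|F|=2$ --- impossible for a group of order $4$ --- the group $G$ contains no Klein four subgroup, so a Sylow $2$-subgroup $P\ge Q_8$ has a unique involution and, being non-cyclic, is generalized quaternion. One then shows $P$ is normal, writes $G=P\rtimes H$ with $H$ of odd order acting through $\mathrm{Aut}(Q_8)\cong S_4$, and uses that the only odd-order subgroups of $S_4$ are trivial or $Z_3$: the action is either trivial, forcing $G=Z_p\times Q_8$ once $H$ is cut down to $Z_p$, or the order-$3$ permutation of $i,j,k$, giving $SL(2,3)=Z_3\ltimes Q_8$. \textbf{The main obstacle} is exactly this pinning of the $2$-structure together with the odd part: excluding the larger generalized quaternion groups $Q_{2^n}$ ($n\ge 4$) as Sylow subgroups, and forbidding $H$ from being anything larger than $Z_p$ or $Z_3$, both demand delicate covering and counting arguments, since enlarging either factor tends to create new proper non-cyclic subgroups whose $EF$-partitionability \emph{with the same} $F=\langle z\rangle$ must be explicitly ruled out; I expect this to be where essentially all the real work (and the need for the precise hypothesis) concentrates.
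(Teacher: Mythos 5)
Your reduction is sound as far as it goes: the verification of the listed groups (including the correct observation, which the paper's statement misses, that $p$ must be odd in $Z_p\times Q_8$, since $Z_2\times Q_8$ contains the Klein four subgroup $Z_2\times Z(Q_8)$), the key lemma that among minimal non-cyclic groups only $Q_8$ admits an $EF$-partition with $F\neq 1$, and the consequences $Q_8\leq G$, $F=Z(Q_8)$, no Klein four subgroups, generalized quaternion Sylow $2$-subgroups. But your proof stops exactly where the theorem acquires content. You assert with no argument that the Sylow $2$-subgroup $P$ is normal (the paper gets its normal Sylow structure for free from Theorem \ref{T1}, which yields the dichotomy ``$G$ nilpotent or $G$ Schmidt''; in your purely local setup this step would need genuine input, e.g.\ Brauer--Suzuki), and you explicitly defer the two decisive exclusions --- ruling out $P\cong Q_{2^n}$ for $n\geq 4$ and bounding the odd part --- to unspecified ``delicate covering and counting arguments''. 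Naming the main obstacle is not the same as overcoming it; as written, the ``only if'' direction is unproven. For comparison, the paper's mechanism for the first exclusion is short: if $P<G$ is proper, the hypothesis gives $P$ itself an $EF$-partition with $|F|=2$ and $F=Z(P)$, so by Theorem \ref{zapa1} and Isaacs' Theorem $P/F\cong D_{2^{n-1}}$ must have exponent $2$, forcing $P=Q_8$.

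Worse, the step you deferred is not merely hard: part of it is false, precisely in the case your outline (and, in its Case 2, the paper's own proof) fails to separate, namely when the generalized quaternion Sylow $2$-subgroup is all of $G$. Take $G=Q_{16}=\langle a,b\mid a^8=1,\ b^2=a^4,\ b^{-1}ab=a^{-1}\rangle$. Its only proper non-cyclic subgroups are the two copies of $Q_8$, namely $\langle a^2,b\rangle$ and $\langle a^2,ab\rangle$, and both have center $\langle a^4\rangle=Z(Q_{16})$; hence with $F=Z(Q_{16})$ every proper non-cyclic subgroup of $Q_{16}$ has an $EF$-partition, yet $Q_{16}$ is on nobody's list. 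So no covering or counting argument can exclude it, and the classification itself needs a correction at this point. (The paper's gap is the same one: its Case 2 applies the Case 1 argument to the non-cyclic Sylow subgroup $P_1$ even when $P_1=G$, where the hypothesis says nothing about $P_1$; note that $Q_{16}$ does have an $ET$-partition, namely $\{\langle a\rangle,\langle a^2,b\rangle,\langle a^2,ab\rangle\}$ with $T=\langle a^2\rangle\cong Z_4$, but $T\neq F$, and $Q_{16}/T\cong Z_2\times Z_2$ has exponent $2$, so Isaacs' Theorem produces no contradiction there.) Had you written out the quotient argument for proper $P$ instead of deferring it, the necessity of $P<G$ would have surfaced, and with it the $Q_{16}$ exception.
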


\section{Proofs}

 Zappa proved the following two theorems. In order to prove the main results we need the following theorems.

\begin{thm}{\rm (}\cite{zappa2}{\rm )}\label{zapa1}
Let $G$ be a group and $S$ a subgroup of $G$. Let $N$ be a normal subgroup of $G$ such that $N\leq S$.
Then the set $\{H_1, H_2, \dots , H_r\}$ is a strict $S$-partition of $G$ if and only if $\{H_1/N, H_2/N, \dots , H_r/N\}$ is a strict $S/N$-partition of $G/N$.
\end{thm}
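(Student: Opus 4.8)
The plan is to deduce this entirely from the correspondence theorem (the lattice isomorphism theorem) applied to the quotient map $\pi\colon G\to G/N$. The key preliminary observation is that because $N\trianglelefteq G$ and $N\leq S\leq H_i$ for every $i$ (recall $S=H_i\cap H_j\leq H_i$), each $H_i$, as well as $S$ itself, is a subgroup of $G$ containing $N$. Hence $H_i/N$ and $S/N$ are genuine subgroups of $G/N$, and the assignment $A\mapsto A/N$ is an inclusion-preserving bijection from the subgroups of $G$ containing $N$ onto the subgroups of $G/N$, with inverse $\overline{A}\mapsto \pi^{-1}(\overline{A})$. Two facts extracted from this bijection will do all the work: first, $A$ is proper in $G$ if and only if $A/N$ is proper in $G/N$; second, for subgroups $A,B$ with $N\leq A\cap B$ one has $(A/N)\cap(B/N)=(A\cap B)/N$. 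For the latter, the inclusion $\supseteq$ is immediate, and for $\subseteq$ a coset $xN$ lying in both $A/N$ and $B/N$ forces $x\in A$ and $x\in B$ precisely because $N\leq A$ and $N\leq B$.

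For the forward direction, assuming $\{H_1,\dots,H_r\}$ is a strict $S$-partition of $G$, I would verify the three defining conditions for $\{H_1/N,\dots,H_r/N\}$ in order. Each $H_i/N$ is proper by the properness equivalence; the covering $G=\bigcup_i H_i$ pushes forward to $G/N=\bigcup_i H_i/N$ since $\pi$ is surjective; and for $i\neq j$ the intersection identity gives $(H_i/N)\cap(H_j/N)=(H_i\cap H_j)/N=S/N$. The converse runs symmetrically: properness lifts back along the bijection, a covering of $G/N$ pulls back to a covering of $G$ because $N\leq H_i$ yields $\pi^{-1}(H_i/N)=H_i$ and hence $g\in H_i$ whenever $gN\in H_i/N$, and finally from $(H_i\cap H_j)/N=(H_i/N)\cap(H_j/N)=S/N$ together with $N\leq H_i\cap H_j$ and $N\leq S$ the injectivity of the correspondence forces $H_i\cap H_j=S$.

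There is no substantive obstacle in this argument; it is essentially bookkeeping with the correspondence theorem. The one step meriting a moment's care is checking that forming pairwise intersections commutes with passing to the quotient, and this is exactly the place where the hypothesis $N\leq S$ is indispensable: it guarantees $N\leq H_i\cap H_j$, so that both $H_i\cap H_j$ and $S$ lie in the range of the subgroup correspondence and can be recovered from their images $S/N$. Everything else follows formally from the surjectivity and injectivity of $\pi$ on the relevant lattice of subgroups.
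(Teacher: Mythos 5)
Your proof is correct. Note that the paper itself offers no proof of this statement --- it is quoted as a known result of Zappa from \cite{zappa2} --- so there is no internal argument to compare against; your correspondence-theorem bookkeeping (properness, covering, and pairwise intersections all transferring along the bijection $A \mapsto A/N$ on subgroups containing $N$, with the hypothesis $N \leq S$ guaranteeing every relevant subgroup lies in the range of that bijection) is exactly the standard argument that establishes it, and it is complete as written.
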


\begin{thm}{\rm (}\cite{zappa2}{\rm )}\label{zapa2}
Let $G$ be a group, $S$ an antinormal subgroup of $G$, and $H_1, H_2, \dots,$$ H_r$ be a set of subgroups of $G$ with $S\leq H_i$ for every $1\leq i \leq r$. Then the following conditions are equivalent:

\begin{itemize}
\item $H_1$, $H_2$, $\dots$, $H_r$ is a non trivial strict $S$-partition of $G$.

\item $G$ is a Frobenius group and $S$ a cyclic Frobenius complement of $G$. The Frobenius kernel $K$ is a $p$-group, $H_i=SK_i$, where $\{K_1,\dots, K_r \}$ is a non trivial partition of $K$.
\end{itemize}
\end{thm}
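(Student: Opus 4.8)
\section*{Proof proposal for Theorem \ref{zapa2}}

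The plan is to prove the two implications separately, with essentially all the substance concentrated in the forward direction. Throughout I assume $S\neq 1$; the degenerate case $S=1$ is the classical partition situation governed by Theorem \ref{class}. For the easy direction, suppose $G=KS$ is Frobenius with kernel the $p$-group $K$, that $\{K_1,\dots,K_r\}$ is a nontrivial partition of $K$ into $S$-invariant subgroups, and set $H_i=SK_i$. Because $S$ normalizes each $K_i$, these are genuine subgroups, and since $S\cap K=1$ (Frobenius) the modular law gives $|H_i|=|S||K_i|$ and, for $i\neq j$, $H_i\cap H_j=S(K_i\cap K_j)=S$. As $\bigcup_i K_i=K$ we get $\bigcup_i H_i=SK=G$, so $\{H_1,\dots,H_r\}$ is a nontrivial strict $S$-partition. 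This direction is a direct verification.

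For the forward direction, let $\{H_1,\dots,H_r\}$ be a nontrivial strict $S$-partition with $S$ antinormal, i.e. $\mathrm{Core}_G(S)=\bigcap_{g\in G}S^g=1$. My first step is to observe that $S$ normalizes every component: for $s\in S$ and $h\in H_i\setminus S$ one has $shs^{-1}\in H_i$, and $shs^{-1}\notin S$ (otherwise $h\in S$), so conjugation by $S$ preserves $H_i\setminus S$ and fixes $S$; hence $S\leq N_G(H_i)$, and in particular $S^g\leq H_i$ whenever $g\in H_i$. The crux is then to show that $G$ is a Frobenius group with complement $S$, equivalently that $S$ is malnormal: $S\cap S^g=1$ for every $g\notin S$ (which then forces $N_G(S)=S$). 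I would argue this by combining antinormality with the rigidity of the cover: given a nontrivial $D=S\cap S^g$ of maximal order, with $g\in H_m\setminus S$ so that $D\leq H_m$, one analyzes $N_G(D)$ and the strict partition it inherits, using the normalization from the first step to push $D$ into every conjugate of $S$ and so contradict $\mathrm{Core}_G(S)=1$. This malnormality step is the main obstacle: antinormality is strictly weaker than the TI-condition, and it is precisely the equal-intersection structure together with the fact that $S$ normalizes each $H_i$ that must be leveraged to upgrade the former to the latter.

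Once $G$ is Frobenius with complement $S$ and kernel $K$ (so $K\unlhd G$, $G=KS$, $S\cap K=1$, and $K$ is nilpotent by Thompson's theorem), the remaining conclusions follow cleanly. Put $K_i=H_i\cap K$. The modular law gives $H_i=(H_i\cap K)S=K_iS$, while $K_i\cap K_j=(H_i\cap H_j)\cap K=S\cap K=1$ and $\bigcup_i K_i=(\bigcup_i H_i)\cap K=K$, so $\{K_1,\dots,K_r\}$ is a nontrivial partition of $K$, each $K_i$ being $S$-invariant since $S$ normalizes $H_i$ and $K$. To see that $K$ is a $p$-group, note that in the nilpotent group $K$ elements of coprime prime orders commute: if $x$ had order $p$ and $y$ order $q\neq p$, then $xy$ would have order $pq$ and lie in a single component $K_l$, whence $x,y\in K_l$, collapsing the partition to one component, impossible for $r\geq 2$. (Alternatively, this is immediate from Theorem \ref{class}, since among the listed types only the $p$-group case is nilpotent.)

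Finally, $S$ is cyclic. It acts fixed-point-freely on $K$, so a priori its Sylow subgroups are cyclic or generalized quaternion; the extra input is the $S$-invariant partition $\{K_i\}$, which forces the existence of many $S$-invariant subgroups of $K$. A generalized-quaternion (or otherwise noncyclic) complement acts with too few invariant subgroups to admit such a partition, so $S$ must be cyclic. I regard the malnormality reduction in the previous paragraph as the genuine difficulty; this last point and the $p$-group conclusion are comparatively short once the Frobenius structure is in hand.
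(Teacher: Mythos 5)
Your proposal cannot be checked against the paper's own argument, because the paper contains none: Theorem \ref{zapa2} is imported verbatim from Zappa \cite{zappa2} as a known tool, stated without proof. Judged on its own merits, your backward implication is correct and routine, and the kernel-is-a-$p$-group step is sound once the Frobenius structure is in hand (your commuting-coprime-elements argument works after the small observation that each nonidentity element of $K$ lies in a \emph{unique} component, so all $p$- and $q$-elements are driven into one $K_l$, forcing $K_l=K$; alternatively Theorem \ref{class} applies). Likewise the bookkeeping $K_i=H_i\cap K$, $H_i=K_iS$ by Dedekind, $K_i\cap K_j=S\cap K=1$, $\bigcup_i K_i=K$ is fine. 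But the forward direction is where the entire content of the theorem lives, and there your proposal has two genuine gaps.

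First, the Frobenius/malnormality step is asserted rather than proved, and the tool you propose to prove it with is vacuous: since $S\leq H_i$ by hypothesis, $S\leq H_i\leq N_G(H_i)$ holds for free, so your ``first step'' ($S$ normalizes each component; $S^g\leq H_i$ for $g\in H_i$) supplies no leverage at all. The sketch ``analyze $N_G(D)$ and the strict partition it inherits'' for a maximal $D=S\cap S^g\neq 1$ does not get off the ground: $D$ need not be normal in $S$, so $S$ need not even lie in $N_G(D)$, and the intersections $N_G(D)\cap H_i$ need not pairwise meet in a common subgroup --- there is no inherited strict $S$-partition to analyze. You correctly label this step ``the main obstacle''; it is, in effect, the theorem, and it remains unproved. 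Second, cyclicity of $S$ is dispatched with ``too few invariant subgroups,'' which is precisely the assertion requiring proof. Even in the simplest test case, $S=Q_8$ acting fixed-point-freely on an elementary abelian kernel, one needs a real argument: $-1$ must act as inversion, so $K$ is an isotypic module for the split quaternion algebra, invariant subgroups are submodules of sizes $\abs{V}^k$, and a counting estimate shows that pairwise trivially intersecting proper submodules cannot cover $K$; for a general noncyclic Frobenius complement and a general partitioned $p$-group kernel, more is needed still. A final caveat: for $S=1$ the stated equivalence is actually \emph{false} ($PSL(2,q)$ admits nontrivial partitions without being Frobenius), so the theorem tacitly assumes $S\neq 1$; your aside that the case $S=1$ is ``governed by Theorem \ref{class}'' quietly concedes this rather than resolving it. In sum: the easy implication and the $p$-group reduction are in order, but the two claims that make the theorem nontrivial --- that $G$ is Frobenius with complement $S$, and that $S$ is cyclic --- are not established.
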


\begin{lemma}\label{lr1}
Assume that  $G$ is a group which has an $ES$-partition. Then $$ G \text{~~is~~ nilpotent~if~and~only~if~~~~~~}  \exists ~~ i\in \mathbb{N} \text~~{such ~~that ~~} S\leq Z_i(G) \text{~~and~~} S\unlhd G.$$
\end{lemma}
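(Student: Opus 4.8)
The plan is to prove the two implications separately, with Zappa's Theorems \ref{zapa1} and \ref{zapa2} (together with Isaacs' Theorem) as the main tools. Note first that, since the components of an $ES$-partition have equal finite order, $G$ is necessarily finite, so the upper central series reaches $G$.

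For the ``if'' direction, suppose $S \unlhd G$ and $S \le Z_i(G)$ for some $i$. As $S$ is normal, Theorem \ref{zapa1} (applied with $N=S$) turns the $ES$-partition $\{H_1,\dots ,H_n\}$ into an ordinary partition $\{H_1/S,\dots ,H_n/S\}$ of $G/S$; since the $|H_j|$ are all equal, so are the $|H_j/S|$, i.e. $G/S$ admits an equally partition. By Isaacs' Theorem, $G/S$ is a $p$-group of exponent $p$, hence nilpotent, say of class $d$, so that $\gamma_{d+1}(G)\le S$. Combining this with $S\le Z_i(G)$ and the standard inclusion $[Z_i(G),G]\le Z_{i-1}(G)$, I would iterate the commutator $i$ times to obtain $\gamma_{d+1+i}(G)=1$, so $G$ is nilpotent. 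This direction is routine once Zappa and Isaacs are in hand.

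For the ``only if'' direction, assume $G$ is nilpotent of class $c$. Then $S\le G = Z_c(G)$ for free, so the whole content is to show $S\unlhd G$. The key move is to pass to the normal core $N=\bigcap_{g\in G} gSg^{-1}$, so that $N\unlhd G$, $N\le S\le H_j$ for every $j$, and $S/N$ is core-free, i.e. \emph{antinormal}, in $G/N$. By Theorem \ref{zapa1} the family $\{H_j/N\}$ is an (equal) strict $S/N$-partition of $G/N$, and each $H_j/N$ is proper in $G/N$ because $H_j\subsetneq G$. Suppose, for contradiction, that $S\ne N$; then $S/N\ne 1$ and, as $n\ge 2$, the partition is nontrivial. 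Theorem \ref{zapa2} now forces $G/N$ to be a nontrivial Frobenius group. But $G/N$ is a quotient of the nilpotent group $G$, hence nilpotent, whereas a nontrivial Frobenius group is never nilpotent (its complement is proper and self-normalizing, violating the normalizer condition). This contradiction yields $S=N\unlhd G$, completing the forward direction.

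The main obstacle is precisely this last step: the hypotheses only present $S$ as the common intersection of the components, which is not visibly normal, and conjugating the partition by $g\in G$ produces a possibly different $ES$-partition whose intersection is $gSg^{-1}$. Recognizing $S/N$ as antinormal in $G/N$ is exactly what unlocks Zappa's structural Theorem \ref{zapa2}, after which the non-nilpotency of Frobenius groups does the rest. I expect the only points needing care to be checking that the reduced partition is genuinely nontrivial (so that \ref{zapa2} applies) and that the passage $\{H_j\}\mapsto\{H_j/N\}$ preserves both the equal-order condition and the properness of the components, all of which follow from $n\ge 2$ and $H_j\subsetneq G$.
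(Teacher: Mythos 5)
Your proposal is correct and follows essentially the same route as the paper: Zappa's Theorem \ref{zapa1} plus Isaacs' Theorem for the ``if'' direction (you merely spell out the commutator iteration $\gamma_{d+1}(G)\le S\le Z_i(G)\Rightarrow\gamma_{d+1+i}(G)=1$ that the paper leaves implicit), and for the ``only if'' direction the same reduction to the normal core $S_G$ so that Theorem \ref{zapa2} forces a Frobenius quotient, contradicting nilpotency. The only cosmetic difference is that you handle the antinormal case and the proper-core case in one stroke via $N=S_G$ (which in fact treats the edge case $S=1$ more cleanly), whereas the paper first disposes of $S$ antinormal separately using $Z(G)=1$.
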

\begin{proof}
Let $\Pi =\{H_1, \dots , H_n\}$ be an $ES$-partition for $G$, $S\leq Z_i(G)$ for some $i\in \mathbb{N}$ and $S\unlhd G$. Then the set $\{H_1/S,\dots, H_n /S \}$ is an equally partition for $G/S$. Therefore by Isaacs' Theorem, $G/S$ is a $p$-group of exponent $p$. Hence $G$ is nilpotent.

Now assume that $G$ is nilpotent group and has an $ES$-partition. Clearly, $\exists ~~i\in \mathbb{N}$ such  that $S\leq Z_i(G)$.
So we it is enough to show that $S\unlhd G$. If $S$ is antinormal subgroup of $G$, then by Theorem \ref{zapa2}  $G$ is a Frobenius group so $Z(G)=1$ which
contradicts to nilpotency of $G$. So $S$ is not antinormal  and then there exists $1\neq N\unlhd G$ such that $N\leq S$. Let $N=S_G$ be the core of $S$ in $G$. If $S_G< S$,
 then the subgroup $S/S_G$ of $G/{S_G}$ is antinormal subgroup of $G/{S_G}$ and by Theorem \ref{zapa1}, $\{ H_1/ S_G , H_2/S_G, \dots , H_r/S_G\}$ is a strict $S/S_G$-partition
  of $G/S_G$. Now again by Theorem \ref{zapa2}, $G/{S_G}$ is Frobenius group which contradicts with nilpotency of $G/S_G$. Thus $S=S_G$.
\end{proof}

\begin{remark}
In Lemma \ref{lr1}, the condition of equality of orders of components of $\Pi$ is necessary. In particular, there are some non-nilpotent groups like  $G$ such that $G$ has a strict $Z(G)$-partition (not necessarily equal). For example consider the dihedral group $D_{2n}=\langle x, y \mid  x^n =y^2 =1 , x^y=x^{-1} \rangle $, where $n=2k$ is even.  Then $Z(D_{2n})=\langle x^k \rangle$, $C_{D_{2n}}(x)=\langle x \rangle$ and for every $1\leq i\leq n$, $C_{D_{2n}}(x^iy)=\{e, x^iy, x^k, x^{k+i}y\}$. Now the set of centralizers of $G$ forms an strict $Z(G)$-partition for $G$ and $D_{2n}$ is not nilpotent in general.
\end{remark}

\begin{lemma}
Let $G$ be a finite group which has an $ES$-partition
 $\{H_1, \dots , H_n\}$. Then for every $1\leq i\leq n$, $\pi(H_i)=\pi(G)$.
\end{lemma}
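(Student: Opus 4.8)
The plan is to reduce the claim to a single inclusion and then invoke Cauchy's theorem together with the covering property. First I would record the trivial observations. Since $\{H_1,\dots,H_n\}$ is an $ES$-partition, all components have the same order, say $|H_i|=m$ for every $i$; consequently the prime sets $\pi(H_1),\dots,\pi(H_n)$ all coincide, being equal to $\pi(m)$. Thus it suffices to prove $\pi(G)=\pi(H_1)$. Moreover $H_1\leq G$ gives $\pi(H_1)\subseteq\pi(G)$ immediately, so the entire content of the lemma is the reverse inclusion $\pi(G)\subseteq\pi(H_1)$.

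For that reverse inclusion I would argue pointwise on primes. Fix a prime $p\in\pi(G)$, so $p$ divides $|G|$. By Cauchy's theorem $G$ contains an element $x$ of order $p$. Because $\Pi$ is a cover, $G=\bigcup_{i=1}^n H_i$, the element $x$ lies in some component $H_i$. Then $p=|x|$ divides $|H_i|=m$, so $p\in\pi(H_i)=\pi(H_1)$, using the equality of orders established above. Since $p\in\pi(G)$ was arbitrary, this yields $\pi(G)\subseteq\pi(H_1)$, completing the proof.

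The one hypothesis that must be used essentially is the equality of the orders of the components, and I would flag this as the conceptual (rather than technical) crux. The covering property alone only places each element of order $p$ into \emph{some} single component, so without the equal-order condition one could only conclude that every prime of $G$ appears in at least one component, not in all of them simultaneously. The $ES$-assumption is precisely what upgrades this to $\pi(H_i)=\pi(G)$ for every $i$; the preceding remark's dihedral example, which admits an unequal strict $Z(G)$-partition, illustrates that the conclusion can fail once equality of orders is dropped. There is no serious obstacle beyond this observation, and no case analysis or appeal to the classification in Theorem \ref{class} should be needed.
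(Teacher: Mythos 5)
Your proof is correct and takes essentially the same route as the paper's: use Cauchy's theorem to produce an element of order $p$, locate it in some component via the covering property, and then transfer the prime to every component using the equality of the orders $|H_i|=|H_j|$. Your write-up is in fact a cleaner, more explicit version of the paper's own (rather terse) argument, so there is nothing to add.
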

\begin{proof}
Let $p\in \pi(G)$ and $g\in G$ with $|g|=p$. Then there exists $H_i$ such that $g\in H_i$. Thus $\pi(G)\subseteq \cup_{i=1}^{n} \pi(H_i)$ and so $\pi(H_j)=\pi(H_i)$ for every $1\leq i, j\leq n$, since $|H_i|=|H_j|$.  Hence for every $1\leq i\leq n$, $\pi(H_i)=\pi(G)$.
\end{proof}

\begin{cor}\label{co1}
Let $G$ be a finite group with square-free order. Then $G$ has no $ES$-partition and so every square free subgroup of every minimal non-$ES$-partition group is cyclic.
\end{cor}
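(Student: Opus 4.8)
The plan is to prove the two assertions in turn, obtaining the second as a consequence of the first together with the minimality hypothesis.

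For the first assertion I would argue by contradiction. Suppose a finite group $G$ of square-free order admits an $ES$-partition $\{H_1,\dots,H_n\}$. Since the components are proper subgroups whose union is $G$, a single member could not exhaust $G$, so $n\ge 2$ and every $H_i$ is a proper subgroup of $G$. By the lemma immediately preceding this corollary each component satisfies $\pi(H_i)=\pi(G)$, so $|H_i|$ is divisible by every prime dividing $|G|$. Because $|G|$ is square-free we may write $|G|=\prod_{p\in\pi(G)}p$, and then from $|H_i|\mid |G|$ together with the fact that all these primes divide $|H_i|$ we are forced to conclude $|H_i|=|G|$. This contradicts $H_i$ being a proper subgroup, so no $ES$-partition of $G$ can exist.

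For the second assertion, let $M$ be a minimal non-$ES$-partition group and let $H$ be a proper square-free subgroup of $M$; the goal is to show $H$ is cyclic. Suppose not, so that $H$ is a proper non-cyclic subgroup of $M$. Then the defining property of a minimal non-$ES$-partition group supplies an $ET$-partition of $H$ for some $T\le H$. But $H$ has square-free order, so the first assertion tells us $H$ admits no $ES$-partition at all — a contradiction. Hence $H$ is cyclic, which is exactly the claim.

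The parts of the argument that need care are definitional rather than computational. First, one must ensure the partition is genuinely nontrivial ($n\ge 2$ with proper components), since this is precisely what turns the equal-order conclusion $|H_i|=|G|$ into a contradiction; ruling out a degenerate one-member ``partition'' is the only real subtlety in the first assertion. Second, the minimality hypothesis imposes the partition property on proper non-cyclic subgroups, so the contradiction in the second assertion arises exactly in the non-cyclic case and ``cyclic'' is the intended escape; I would therefore state the conclusion for proper square-free subgroups, since the ambient group itself (e.g.\ a square-free Schmidt group such as the symmetric group on three letters) is non-cyclic yet is the expected boundary case rather than a counterexample.
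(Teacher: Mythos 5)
Your proof is correct and follows exactly the route the paper intends: the corollary is stated immediately after the lemma asserting $\pi(H_i)=\pi(G)$ for the components of an $ES$-partition, and your Lagrange-plus-square-free argument forcing $|H_i|=|G|$ is precisely how that lemma yields the first claim, with the second claim following from the definition of a minimal non-$ES$-partition group just as you argue. Your closing caveat — that the conclusion should be read for \emph{proper} square-free subgroups, since a square-free Schmidt group such as $S_3$ is itself a non-cyclic minimal non-$ES$-partition group — is a genuine and correct refinement of the paper's loosely worded statement.
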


 \begin{lemma}\label{semi}
Assume that  $G$ is a finite group which has an $ES$-partition, say $\Pi=\{H_1, H_2, \dots , H_n\}$. If $G$ acts on the group $H$, then the semidirect product $G\ltimes H$ has an $E(S\ltimes H)$-partition.
\end{lemma}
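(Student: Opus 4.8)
The plan is to lift the given partition of $G$ to a partition of $G\ltimes H$ by adjoining the normal factor $H$ to each component. Since each $H_i\leq G$ and $G$ acts on $H$, restricting this action makes $H$ into an $H_i$-group, so $H_i\ltimes H$ is a well-defined subgroup of $G\ltimes H$; likewise $S\ltimes H$ is a subgroup, as $S\leq H_i$. I would set $\Pi'=\{H_1\ltimes H, H_2\ltimes H, \dots, H_n\ltimes H\}$ and claim this is an $E(S\ltimes H)$-partition of $G\ltimes H$. The verification then breaks into the four defining properties: properness of the components, that their union is the whole group, that their pairwise intersections all equal $S\ltimes H$, and that they have equal order.

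First I would check that each $H_i\ltimes H$ is a proper subgroup: the $H_i$ are proper in $G$, so $|H_i\ltimes H|=|H_i||H|<|G||H|=|G\ltimes H|$. Next, the union is everything, since any $(g,h)\in G\ltimes H$ has $g\in G=\bigcup_{i=1}^n H_i$, hence $g\in H_i$ for some $i$ and $(g,h)\in H_i\ltimes H$; therefore $G\ltimes H=\bigcup_{i=1}^n (H_i\ltimes H)$. For equal orders, the equality $|H_i|=|H_j|$ coming from the $ES$-partition $\Pi$ gives $|H_i\ltimes H|=|H_i||H|=|H_j||H|=|H_j\ltimes H|$, so all components of $\Pi'$ have the same order.

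The key step is the intersection computation. For $i\neq j$, an element $(g,h)$ lying in both $H_i\ltimes H$ and $H_j\ltimes H$ must have $g\in H_i\cap H_j=S$, while its $H$-coordinate is unconstrained; conversely, every pair $(s,h)$ with $s\in S$ and $h\in H$ lies in both. Hence $(H_i\ltimes H)\cap(H_j\ltimes H)=S\ltimes H$ for all $i\neq j$, and $\Pi'$ is the desired $E(S\ltimes H)$-partition.

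I do not expect a serious obstacle here; the single point demanding care is verifying that the pairwise intersection collapses \emph{exactly} to $S\ltimes H$ rather than to something larger. This works precisely because the full normal factor $H$ is contained in every component $H_i\ltimes H$, so the intersection is governed entirely by the $G$-coordinates, where it is pinned down by the hypothesis $H_i\cap H_j=S$. One should also remember to note explicitly that $H_i\ltimes H$ and $S\ltimes H$ are genuine subgroups of $G\ltimes H$ under the restricted action, so that all of the set-theoretic identities above are identities of subgroups.
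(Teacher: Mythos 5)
Your proposal is correct and follows exactly the paper's own argument: lift the partition componentwise to $\{H_i\ltimes H\}$, observe the union covers $G\ltimes H$, and compute the pairwise intersections as $(H_i\cap H_j)\ltimes H=S\ltimes H$. The only difference is that you spell out the properness and equal-order checks, which the paper leaves implicit.
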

\begin{proof}
Since $G$ acts on $H$, so every $H_i$ acts on $H$ and we can form the semidirect product $H_i\ltimes H$. On the other hand
\begin{align*}
G\ltimes H =(\cup_{_{i=1}} ^nH_i) \ltimes H=\cup _{i=1} ^n (H_i \ltimes H)
\end{align*}
and
\begin{align*}
(H_i\ltimes H)\cap (H_j\ltimes H)= (H_i \cap H_j) \ltimes H =S\ltimes H
\end{align*}
So the set $\{H_i \ltimes H, \dots , H_n\ltimes H\}$ is an $E(S\ltimes H)$-partition for $G\ltimes H$.
\end{proof}

\begin{lemma}\label{lr2}
Let $G$ be a finite group. Then
$G$ is nilpotent if and only if every subgroup $H$ of $G$  has  an $ES$-partition, for some $S \leq H.$
\end{lemma}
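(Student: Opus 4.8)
The plan is to prove the two implications separately. For the forward implication I would show that every non-cyclic nilpotent group carries an $ES$-partition and then deduce the statement for subgroups from the fact that subgroups of nilpotent groups are again nilpotent (cyclic subgroups carry only the trivial cover and fall outside the requirement, in keeping with the non-cyclic convention used throughout). For the converse I would argue by a minimal counterexample: the minimal offender must be a minimal non-nilpotent (Schmidt) group, and the whole difficulty is then concentrated in showing that a Schmidt group can have no $ES$-partition, which yields the contradiction.

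For the forward direction, let $K$ be a non-cyclic subgroup of the nilpotent group $G$. Then $K$ is nilpotent, so $K=P\times R$ is the direct product of its Sylow subgroups, and since $K$ is non-cyclic at least one factor, say $P$, is a non-cyclic $p$-group. The Frattini quotient $P/\Phi(P)$ is then elementary abelian of rank at least $2$, that is, a $p$-group of exponent $p$ and order $>p$, so by Isaacs' Theorem it admits an equally partition $\{\bar K_1,\dots,\bar K_n\}$ with trivial intersection. Applying Theorem \ref{zapa1} with $N=\Phi(P)$ lifts this to a strict $\Phi(P)$-partition of $P$ whose members all have order $|\bar K_i|\,|\Phi(P)|$; hence $P$ has an $ES$-partition with $S=\Phi(P)$. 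Finally, viewing $K=P\times R$ as $P\ltimes R$ with trivial action, Lemma \ref{semi} promotes the $ES$-partition of $P$ to an $E(\Phi(P)\times R)$-partition of $K$, so every non-cyclic subgroup of $G$ has an $ES$-partition.

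For the converse, suppose the conclusion fails and let $G$ be a counterexample of least order: every subgroup of $G$ has an $ES$-partition, yet $G$ is not nilpotent. Every proper subgroup $K<G$ inherits the hypothesis, since its subgroups are subgroups of $G$, and has smaller order, so by minimality $K$ is nilpotent. Hence $G$ is a minimal non-nilpotent group, and by Schmidt's theorem \cite{schm} $G=P\rtimes\langle y\rangle$ with $P=O_p(G)$ the normal Sylow $p$-subgroup, $\langle y\rangle$ a cyclic Sylow $q$-subgroup, and $G/P\cong\langle y\rangle$ cyclic. Being non-cyclic, $G$ must itself possess an $ES$-partition, so it suffices to contradict exactly this.

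The crux — and the step I expect to be the main obstacle — is to show that a Schmidt group admits no $ES$-partition. Suppose $\{H_1,\dots,H_n\}$ were one, with common intersection $S$. Each $H_i$ is a proper, hence nilpotent, subgroup with $\pi(H_i)=\pi(G)=\{p,q\}$ by the preceding lemma, so $H_i=P_i\times Q_i$ with $P_i\le P$ a $p$-group and $Q_i$ a cyclic $q$-group of fixed order $q^{d}$. The images $H_iP/P\cong Q_i$ all coincide with the unique subgroup of that order in the cyclic group $G/P$; since $\bigcup_i H_iP/P=G/P$, this forces each $Q_i$ to be a full Sylow $q$-subgroup, say $Q_i=\langle y\rangle^{g_i}$. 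As $H_i$ is a direct product, $P_i$ centralises $Q_i$, whence $P_i\le C_P(y^{g_i})=C_P(y)^{g_i}$. Here I would invoke the structural fact that in a Schmidt group $\langle y\rangle$ acts irreducibly and nontrivially on the chief factor $P/\Phi(P)$, so its fixed points satisfy $C_P(y)\le\Phi(P)$; since $\Phi(P)\unlhd G$ this gives $P_i\le\Phi(P)$ for every $i$. But then $P=\bigcup_i(H_i\cap P)=\bigcup_i P_i\le\Phi(P)$, contradicting $\Phi(P)<P$. Establishing the fixed-point inclusion $C_P(y)\le\Phi(P)$ cleanly — equivalently, fixed-point-freeness of the action on $P/\Phi(P)$ — is the technical heart of the argument, while everything else is bookkeeping with the earlier lemmas.
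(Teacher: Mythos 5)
Your proof is correct, and in the harder direction it takes a genuinely different route from the paper's. The forward implication coincides with the paper's own argument: Isaacs' Theorem applied to the elementary abelian quotient $P/\Phi(P)$, lifted to an $E(\Phi(P))$-partition of $P$ via Theorem \ref{zapa1}, then promoted to the whole nilpotent subgroup via Lemma \ref{semi}. For the converse, both you and the paper reduce by minimality to a Schmidt group $G=P\ltimes Q$ with $Q=\langle y\rangle$ cyclic, but from there the arguments diverge completely. The paper analyzes the common intersection $S$: it rules out antinormality of $S$ via Theorem \ref{zapa2} (the Frobenius structure is contradicted by two commuting elements of coprime orders found inside one nilpotent component), runs the same argument in $G/S_G$ to force $S\unlhd G$, and then applies Isaacs' Theorem to $G/S$ to conclude $P\le S$ or $Q\le S$, killing each case separately. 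You never touch $S$ at all: the equal-order condition together with the covering of the cyclic quotient $G/P$ forces every component $H_i$ to contain a full Sylow $q$-subgroup $\langle y\rangle^{g_i}$, and nilpotency of $H_i$ then traps its $p$-part inside $C_P(y^{g_i})=C_P(y)^{g_i}\le\Phi(P)$, so the components cannot cover $P$. The inclusion $C_P(y)\le\Phi(P)$ that you flag as the technical heart is not a gap: it follows in two lines from the standard structure of Schmidt groups ($\langle y\rangle$ acts nontrivially and irreducibly on the chief factor $P/\Phi(P)$) combined with coprime action, namely $C_{P/\Phi(P)}(y)=C_P(y)\Phi(P)/\Phi(P)$; this is no deeper than the structural facts the paper itself invokes without proof, such as $Z(G)=\Phi(G)=\Phi(P)\times\langle y^q\rangle$ in its Frobenius case. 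As for what each approach buys: the paper stays entirely inside the toolkit it has already set up (Zappa's two theorems and Isaacs' Theorem), at the cost of a delicate case analysis that contains some rough spots (in its antinormal case it picks $x\in S$ of order $p$ even though the complement $S$ there must be a $q$-group, and in the case $Q\le S$ it asserts without justification that $P$ is cyclic); your argument is shorter, dispenses with the antinormal machinery and with Isaacs' Theorem in this direction, and replaces the case analysis on $S$ by a single clean module-theoretic fact about Schmidt groups, which your route must import (or prove) as an external ingredient.
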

\begin{proof}
If $G$ is a nilpotent group, then by Lemma \ref{semi} it is enough to show that every finite $p$-group, $P$ has an $ES$-partition. Now as $P/\phi(P)$ is $p$-elementary  so by Isaacs' Theorem  and Theorem \ref{zapa1},  we can obtain that  $P$ has an $E(\phi(P))$-partition (see also Proposition \ref{p32}, below).

Now suppose that for every non-cyclic subgroup $H$ of $G$ has an $ES$-partition, for a subgroup $S$ of $H$. Now, by  induction on the order of $G$, we deduce that all proper subgroups of $G$ are nilpotent. That is, $G$ is an Schmidt group. Therefore $G=P\ltimes Q$, where $P$ is a normal Sylow p-subgroup and Q is a cyclic Sylow q-subgroup $G$.
Let $\{H_1, \dots, H_t\}$ be an $ES$-partition of $G$. We claim that $S\lhd G$. Suppose by contrary that $S$ is not normal in $G$. First assume that $S$ is antinormal.  Then  Theorem \ref{zapa2} implies that $G$ is a Frobenius group and so $$1=Z(G)=\phi(G)=\phi(P)\times <y^q>.$$
It follows that  $P$ is an elementary abelian Sylow p-subgroup and $Q$ is cyclic of order $q$.   Now let $x\in S$ and $y\in Q$, where $|x|=p$ and $|y|=q$, then there exists $1\leq i\leq t$ such that $y\in H_i$. Now as  $x\in S\leq H_i$ for every $1\leq i\leq t$. Hence $x, y \in H_j$. By assumption, $H_j$ is nilpotent and $(|x|,|y|)=(p,q)=1$, so $xy=yx$, it means that $S$ doesn't act on $Q$ fixed-point-freely, which is a contradiction, since $G$ is Frobenius.
So we may assume that $S_G> 1$, we show that $S_G=S$. If $S_G< S$, then by Zappa's theorem, $G/S_G$ has an $ES/S_G$-partition,  and $S/S_G$ is an antinormal subgroup of $G/S_G$. Hence $G/S_G$ is a Frobenius group which is Schmidt as well. Hence $G/S_G=S/S_G\ltimes QS_G/S_G$ and $|S/S_G|=p$ for some prime $p$. Since $G/S_G$ is Frobenius, so $S/S_G$ must act on $QS_G/S_G$ fixed-point freely. That is, for every $x\in S\setminus S_G$ and $y\in Q\setminus S_G$, $[x,y]S_G=[xS_G,yS_G]\neq S_G$. But we will show that there exists $x\in S\setminus S_G$ and $y\in Q\setminus S_G$ and $|x|=p^i$ and $|y|=q^j$ such that $[x,y]=1$. It follows that $[xS_G, yS_G]=S_G$, and this contradiction shows that $S=S_G$, as wanted. First note that since $QS_G/S_G$ is the kernel of $G/S_G$, so it is non-trivial and there exists $y\in Q\setminus S_G$.  Now as $|S/S_G|=p$, we can conclude that every $q$-element of $S$ belongs to $S_G$ and there exists a $p$-element $x$ belongs to  $S\setminus S_G$.  Let $x\in S\setminus S_G$ and $|x|=p^i$. Therefore $x\in S\leq H_i$ for every $1\leq i\leq t$. Hence there exists $j$ such that $x,y\in H_j$ and  $[x,y]=1$, since $H_j$ is nilpotent.

Therefore $S\lhd G$. Now as $G$ is an Schmidt group and $G=P\ltimes Q$,  Isaacs' Theorem implies that  $G/S$ is a $r$-group of exponent $r$, where $r\in \{p, q\}$. Hence $P\leq S$ or $Q \leq S$. If $P\leq S$, then, as $S$ is nilpotent, we can follow that   $P$ is a normal subgroup of $G$ and so $G$ is nilpotent, a contradiction. If $Q\leq S$, then for every $i$, $Q\leq H_i$ and  so $q^m \mid |H_i|$, for every $i$. On the other hand, $P=\langle g\rangle$ is cyclic and $g\in H_i$ for some $i$.  Now as $|H_i|=|H_j|$ for every $j$, so $p^n \mid |H_i|$. Thus $|G|=|H_i|$, a contradiction.
\end{proof}

\begin{cor}\label{co35}
A group $G$ is a minimal non-$ES$-partition group if and only if  $G$ is a Schmidt group.
\end{cor}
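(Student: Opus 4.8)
The plan is to derive this entirely from Lemma~\ref{lr2}, which characterizes finite nilpotent groups as exactly those all of whose subgroups admit $ES$-partitions. The two notions \emph{minimal non-$ES$-partition group} and \emph{minimal non-nilpotent group} (Schmidt group) are built from the same ``hereditary property plus failure at the top'' pattern, so the equivalence should fall out by matching the two properties level by level. The only care needed is in the bookkeeping of quantifiers, since the defining property of a minimal non-$ES$-partition group refers to each \emph{proper} subgroup having an $ET$-partition for \emph{some} $T$, whereas Lemma~\ref{lr2} refers to \emph{every} subgroup having an $ES$-partition.

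For the forward implication, suppose $G$ is a minimal non-$ES$-partition group. Since $G$ itself has no $ES$-partition, Lemma~\ref{lr2} (in the direction nilpotent $\Rightarrow$ every subgroup, in particular $G$, has an $ES$-partition) forces $G$ to be non-nilpotent. To see that every proper subgroup $H<G$ is nilpotent, I would observe that any subgroup $K\le H$ is again a proper subgroup of $G$, and hence carries an $ES$-partition by the minimality hypothesis; applying Lemma~\ref{lr2} to $H$ then yields that $H$ is nilpotent. Thus $G$ is non-nilpotent with all proper subgroups nilpotent, i.e.\ a Schmidt group.

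For the converse, suppose $G$ is a Schmidt group. Each proper subgroup $H<G$ is nilpotent, so by Lemma~\ref{lr2} every subgroup of $H$ --- in particular $H$ itself --- has an $ES$-partition; this verifies the hereditary part of the definition. It remains to check that $G$ has no $ES$-partition. Were $G$ to admit one, then, combined with the fact that every proper subgroup already has an $ES$-partition, \emph{every} subgroup of $G$ would admit an $ES$-partition, and Lemma~\ref{lr2} would make $G$ nilpotent, contradicting the Schmidt hypothesis. Hence $G$ has no $ES$-partition and is a minimal non-$ES$-partition group.

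Since essentially all the substantive work is already contained in Lemma~\ref{lr2}, I do not expect a genuine obstacle here; the one subtlety worth stating cleanly is the transitivity observation that a subgroup of a proper subgroup of $G$ is itself proper in $G$. This is precisely what lets the ``for some $T$'' hypothesis propagate downward to ``every subgroup,'' so that Lemma~\ref{lr2} can be invoked for each proper subgroup $H$.
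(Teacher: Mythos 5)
Your proof is correct and takes essentially the same approach as the paper: the paper states Corollary~\ref{co35} without a separate proof, treating it as an immediate consequence of Lemma~\ref{lr2}, and your argument is exactly that derivation with the quantifier bookkeeping (in particular, that a subgroup of a proper subgroup of $G$ is itself a proper subgroup of $G$) made explicit. The only caveat you inherit is one already present in the paper: Lemma~\ref{lr2} as stated ignores cyclic subgroups, which can never admit $ES$-partitions, so both your argument and the paper's implicitly read ``every subgroup'' as ``every non-cyclic subgroup.''
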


Note that  Lemmas \ref{lr1} and \ref{lr2}, complete the proof of the main Theorem. But according to Lemma \ref{lr1}, we can see that if a group $G$ has an $ES$-partition, then the properties of
the subgroup $S$ has influence on the structure of $G$.
Therefore we investigate the influence of some properties of the subgroup $S$ (such as, abelian normal, nilpotent normal and solvable subgroup) on the group $G$ which has an $ES$-partition.
Here, we  give a new characterization of finite non-cyclic $p$-groups by whose equal strict partitions (see also Theorem 3.2 of \cite{Afp}).

\begin{prop}\label{p32}
Let $G$ be a non-cyclic finite group. Then $G$ has an $E(\phi(P))$-partition  if and only if $G$  is a $p$-group.
\end{prop}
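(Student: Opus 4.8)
The plan is to prove both implications by passing to the quotient $\bar G=G/\phi(G)$, where $\phi(G)$ denotes the Frattini subgroup (so that $\phi(P)=\phi(G)$ once we know $G=P$ is a $p$-group), and transferring information across the quotient map by means of Zappa's Theorem \ref{zapa1} applied with $N=S=\phi(G)$. Since $\phi(G)\unlhd G$ and, in any $E(\phi(G))$-partition, $\phi(G)$ is contained in every component, this choice of $N$ and $S$ is legitimate and converts an $E(\phi(G))$-partition of $G$ into an \emph{equally partition} of $\bar G$, and conversely. The link to being a $p$-group is then supplied by Isaacs' Theorem.

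For the ``only if'' direction, assume $G$ is a $p$-group. Then $\bar G=G/\phi(G)$ is elementary abelian, and since $G$ is non-cyclic it has rank at least $2$, so $|\bar G|\geq p^{2}$. Viewing $\bar G$ as a vector space over $\mathbb{F}_{p}$, its one-dimensional subspaces form a partition of $\bar G$ into proper subgroups, each of order $p$ and any two meeting only in the identity; this is an equally partition of $\bar G$. First I would pull these components back along the quotient map $G\to\bar G$ to proper subgroups $H_{i}\geq\phi(G)$ of $G$; by the correspondence theorem (equivalently, by Theorem \ref{zapa1}) the $H_{i}$ cover $G$, satisfy $H_{i}\cap H_{j}=\phi(G)$, and all have order $p\,|\phi(G)|$. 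Hence $\{H_{1},\dots,H_{n}\}$ is an $E(\phi(G))$-partition of $G$.

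For the ``if'' direction, suppose $G$ has an $E(\phi(G))$-partition $\{H_{1},\dots,H_{n}\}$. As each $H_{i}$ is proper we have $n\geq 2$, so the induced family $\{H_{i}/\phi(G)\}$ is a nontrivial equally partition of $\bar G$ by Theorem \ref{zapa1}. Isaacs' Theorem then forces $\bar G=G/\phi(G)$ to be a $p$-group of exponent $p$; in particular $G/\phi(G)$ is a $p$-group. To upgrade this to ``$G$ is a $p$-group'' I would use the non-generator property of $\phi(G)$: if $P$ is a Sylow $p$-subgroup of $G$, then $P\phi(G)/\phi(G)$ is a Sylow $p$-subgroup of the $p$-group $\bar G$, whence $P\phi(G)=G$, and since the elements of $\phi(G)$ are non-generators this yields $P=G$. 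Thus $G$ is a $p$-group.

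The routine parts are the two transfers through Theorem \ref{zapa1} and the explicit partition of the elementary abelian group $\bar G$. The step I expect to be the real point is the final one in the ``if'' direction: deducing that $G$ itself (not merely $G/\phi(G)$) is a $p$-group. The naive implication ``$G/N$ a $p$-group $\Rightarrow$ $G$ a $p$-group'' is false for a general normal $N$, so it is essential that $N=\phi(G)$ and that one invokes the Frattini non-generator property $P\phi(G)=G\Rightarrow P=G$. I would also be careful that the hypothesis that $G$ be non-cyclic is used precisely to guarantee $\operatorname{rank}\bar G\geq 2$ in the ``only if'' direction (a cyclic $p$-group has $\bar G\cong\mathbb{Z}_{p}$, which admits no nontrivial partition), and that the components produced are genuinely proper subgroups.
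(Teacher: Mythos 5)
Your proof is correct and follows essentially the same route as the paper: transfer the $E(\phi(G))$-partition to an equally partition of $G/\phi(G)$ via Theorem \ref{zapa1}, apply Isaacs' Theorem, and use a Frattini-subgroup property to conclude. The only differences are that you supply explicit arguments where the paper cites standard facts --- you build the equally partition of the elementary abelian quotient from its one-dimensional subspaces rather than invoking Isaacs' Theorem for existence, and you prove the step ``$G/\phi(G)$ a $p$-group $\Rightarrow$ $G$ a $p$-group'' via Sylow subgroups and the non-generator property, which is exactly the content of the well-known fact $\pi(G)=\pi(G/\phi(G))$ that the paper quotes.
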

\begin{proof}
Let $G$ be a non-cyclic finite group and $\Pi=\{H_1, H_2, \dots , H_r\}$ be an $E(\phi(G))$-partition for $G$. Then by Theorem \ref{zapa1}, $G/ \phi(G)$ has an equal partition. Therefore, by Isaacs' Theorem,
$G/ \phi(G)$ is a $p$-group of exponent $p$. Now it is well-known that for a finite group $G$,
the  prime divisors of
$|G/ \phi(G)|$ is equal to the prime divisors $|G|$. Hence $G$ is a $p$-group.

Assume that $G$ is a non-cyclic finite $p$-group. Then $G/\phi(G)$ is an elementary abelian $p$-group so, by Isaacs' Theorem,  $G/\phi(G)$  has an equally partition and, by  Theorem \ref{zapa1},  $G$ has an
$E(\phi(G))$-partition.
 \end{proof}
 It seems likely that (in view of the above results) every finite group which has an $ES$-partition where $S$ is a abelian normal subgroup, should be abelian. But   we show that there exist non-nilpotent finite groups that has a $ES$-partition, where $S$ is an abelian normal subgroup of $G$.  Consider the following example.

\begin{ex}
Let $A=Z_3\times Z_3\times Z_3$, $B=Z_2\times Z_2$, $b_1=(1,0)$, $b_2=(0,1)$ and $b_3=(1,1)$. Now define an action of $B$ on $A$ by setting
\begin{align*}
(x_1,x_2,x_3)^{b_1}=(-x_1,-x_2,x_3)\\
(x_1,x_2,x_3)^{b_2}=(-x_1,x_2,-x_3)\\
(x_1,x_2,x_3)^{b_3}=(x_1,-x_2,-x_3).
\end{align*}
It is easy to see that the three subgroups $A\langle b_1\rangle$, $A\langle b_2\rangle$, $A\langle b_3\rangle$ are isomorphic and are a cover for group $G=A\rtimes B$,  while $G$ is non-nilpotent.
\end{ex}

\begin{prop}\label{0}
Let $G$ has an $ES$-partition, where $S$ is a solvable subgroup of $G$, then $G$ is solvable.
\end{prop}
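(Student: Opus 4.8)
The plan is to reduce everything to the normal core of $S$ and then feed the two Zappa theorems and Isaacs' Theorem into an extension argument. Write $S_G$ for the core of $S$ in $G$, i.e. the largest normal subgroup of $G$ contained in $S$. Since $S$ is solvable, its subgroup $S_G$ is solvable, so because an extension of a solvable group by a solvable group is solvable, it suffices to show that $G/S_G$ is solvable. Let $\Pi=\{H_1,\dots,H_n\}$ be the given $ES$-partition; note $n\geq 2$, since the components are proper subgroups, so the partition is genuinely nontrivial.

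First I would pass to the quotient $G/S_G$. As $S_G\unlhd G$ and $S_G\leq S\leq H_i$ for every $i$, Theorem \ref{zapa1} shows that $\{H_1/S_G,\dots,H_n/S_G\}$ is a strict $S/S_G$-partition of $G/S_G$; moreover the orders $|H_i/S_G|=|H_i|/|S_G|$ are all equal, so it is again an $E(S/S_G)$-partition. The key structural observation is that $S/S_G$ is antinormal in $G/S_G$: the core of $S/S_G$ in $G/S_G$ equals $S_G/S_G=1$, so no nontrivial normal subgroup of $G/S_G$ is contained in $S/S_G$.

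Now I would split according to whether $S$ is normal. If $S=S_G$, then $G/S_G=G/S$ carries an equally partition (the strict $1$-partition $\{H_i/S\}$, with equal orders), and Isaacs' Theorem forces $G/S$ to be a $p$-group of exponent $p$, which is solvable. If instead $S_G< S$, then $S/S_G$ is a \emph{nontrivial} antinormal subgroup of $G/S_G$ admitting the nontrivial strict $S/S_G$-partition just produced, so Theorem \ref{zapa2} applies: $G/S_G$ is a Frobenius group whose complement $S/S_G$ is cyclic and whose kernel is a $p$-group. Such a group is an extension of a $p$-group by a cyclic group, hence solvable. In either case $G/S_G$ is solvable, and therefore so is $G$.

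I expect the delicate point to be the normal case rather than any deep obstacle: once $S$ is normal one must use the full force of the equal-order hypothesis, because the bare classification of partitioned groups in Theorem \ref{class} permits non-solvable quotients such as $PSL(2,p^h)$ or the Suzuki groups. It is precisely Isaacs' Theorem that excludes these and collapses $G/S$ to a $p$-group of exponent $p$. The remaining verifications — that the core of $S/S_G$ is trivial and that the induced family remains a nontrivial partition with equal component orders — are routine.
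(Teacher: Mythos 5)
Your argument is correct, and while it uses exactly the same ingredients as the paper's proof (Theorems \ref{zapa1} and \ref{zapa2}, Isaacs' Theorem, and closure of solvability under extensions), it is organized differently. The paper splits on whether $S$ itself is antinormal in $G$: if it is, Theorem \ref{zapa2} makes $G$ Frobenius with cyclic complement and $p$-group kernel, hence solvable; if not, it chooses some nontrivial normal $N \leq S$ and, when $N < S$, runs an induction on $|G|$ applied to $G/N$ and its strict $S/N$-partition, reserving Isaacs' Theorem for the case $N = S$. You eliminate the induction by passing in a single step to the quotient by the core $S_G$, where $S/S_G$ is automatically antinormal, and then the same two terminal cases (Isaacs when $S = S_G$, Zappa plus the Frobenius structure when $S_G < S$) finish the proof; this is in fact the device the paper itself uses in the proof of Lemma \ref{lr1}, transplanted to Proposition \ref{0}. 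Your route is a bit cleaner, and it also quietly repairs a soft spot in the paper's version: when $S = 1$, the subgroup $S$ is antinormal, yet Theorem \ref{zapa2} cannot be invoked as stated, since a group admitting a nontrivial partition with trivial pairwise intersections (e.g.\ $PSL(2,p^h)$, by Theorem \ref{class}) need not be Frobenius --- Zappa's theorem tacitly assumes $S \neq 1$. In your case division, $S = 1$ falls under $S = S_G$, where the equal-order hypothesis and Isaacs' Theorem handle it correctly; this is the one place where the equality of the component orders is genuinely needed in both arguments.
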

\begin{proof}
If $S$ is antinormal, then by \cite{zappa2} $S$ is cyclic and $G$ is Frobenius group with complement $S$. So $G$ is solvable. Otherwise if $S$ is not antinormal, then there is a normal subgroup $1\neq N$ of $G$ such that $N\leq S$. If $N< S$, then since $G$ has a strict $S$-partition, so by  Theorem \ref{zapa1}, $G/N$ has strict $S/N$-partition and by induction on the order of $G$, $G/N$ is solvable, so $G$ is solvable. Otherwise, if $N=S$, then $G/N$ has an equal partition and by Isaacs' Theorem, $G/S$ is a $p$-group and so $G$ is solvable.
\end{proof}

Finally, we prove Theorem \ref{T2}. \\\\
 $\mathbf{The~~ proof~~ of~~ Theorem \ref{T2}.}$ Assume that $F$ is a nontrivial subgroup of  group $G$ such that every proper non-cyclic subgroup of $G$ has  an $EF$-partition and $G$ is not minimal non-cyclic group. We consider two cases:\\

  $\mathbf{Case ~1. }$ If $G$ has no $ET$-partition for every subgroup $T$ of $G$.  Then according to  Theorem \ref{T1},  $G$
  is minimal non-nilpotent group.  Suppose that $G=Q\ltimes P$. First note that $S$ is cyclic, since otherwise $S$ has an $ES$-partition, which is impossible.  We consider two cases:

  If $P$ is abelian,  then clearly
 $G$ is minimal non-abelian, since $G$ is minimal non-nilpotent group, every Sylow subgroup of each proper subgroup of $G$ is abelian. If   $P$ is a proper subgroup of $G$, then it follows that $P$ is an elementary abelian subgroup and so the subgroup $Z_p\times Z_p$ has no $ES$-partition with $S\neq 1$, a contrary.  If $G=P$, that is, $G$ is a minimal non-abelian finite $p$-group. It follows that $G$ is minimal non-cyclic $p$-group and so G is a generalized quaternion group. Now as every subgroup of $G$ is cyclic or generalized quaternion group, we can obtain that $G$ is a quaternion group of order $8$, a contrary.

 If $P$ is not abelian. If $P$ has an abelian subgroup which is not cyclic, then it is easy to see that $P$ has a subgroup isomorphic to $Z_p\times Z_p$, which is a contradiction, since $Z_p\times Z_p$ has no $ES$-partition with $S\neq 1$. So we may assume that all abelian subgroups of $P$ are cyclic, then it is well-known that $P$ is generalized quaternion group. It is not hard to see that  $P$ has a subgroup isomorphic to $K=Q_8$, the quaternion group of order $8$.  Now as $S\neq 1$, $K$ has only one $ES$-partition where $S$ is $Z(K)$, which coincides with $Z(P)$ ($S=Z(K)=Z(P)$). We know that $P/S \cong D_{2^{n-1}}$, where $D_{2^{n-1}}$ is the dihedral group of order $2^{n-1}$. On the other hand, according to  Isaacs' Theorem, $P/S$ is a $p$-group of exponent $p$, so $n\in \{1,2,3\}$. It follows that $G=Q\ltimes Q_8=Z_n \ltimes Q_8$. Now as $Aut(Q_8)\cong S_4$, we can obtain that $n=2$ or $3$. Since G is not nilpotent,  $n\neq 2$ and so $G=Z_3 \ltimes Q_8$.\\

$\mathbf{Case ~2. }$ If $G$ has an arbitrary $ET$-partition for some subgroup $T$ of $G$, then by Theorem \ref{T1}, $G$ is nilpotent. Let $G=P_1\times \cdots \times P_t$, where $P_i$ is the sylow $p_i$-subgroup of $G$. As $G$ is not minimal non-cyclic group, so there exists a non-cyclic sylow $p_i$-subgroup $P_i$ of $G$, so $P_i$ has an $ES$-partition and $S\leq P_1$. If there exists $P_j$, with $j\neq i$ and $P_j$ is non-cyclic, then $S\leq P_i\cap P_j=1$, so $S=1$, a contradiction. Therefore $P_1$ is the only non-cyclic Sylow subgroup of $G$. By an argument similar to the one in the Case $(1)$, we can obtain that $P_1=Q_8$. Therefore $G=Q\times Q_8=Z_n \times Q_8$. We claim that $n$ is a prime number. Suppose, by contrary, that   $n\neq q$, $x\in Q$ and $|x|=q$. Put $K=\langle x \rangle \times P=\langle x \rangle \times Q_8$ and assume that $\{H_1, H_2, \dots, H_t\}$ is an $ET$-partition for K.  It is easy to see that $K$ has no $EZ(Q_8)$-partition while $Q_8$ has only an $E(Z(Q_8))$-partition, a contradiction. Thus $G=Z_p\times Q_8$.

As for the converse, it is enough to note that  the only non-cyclic subgroup of $Z_3 \ltimes Q_8$ is $Q_8$, which has only an $E(Z(Q_8))$-partition.  The proof is now complete.

\begin{cor}
If $G$ is a finite group with odd order and $F$ is a nontrivial subgroup of  $G$. Then  every proper non-cyclic subgroup of $G$ has  an $EF$-partition if and only if  G is minimal non-cyclic group.
\end{cor}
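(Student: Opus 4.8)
The plan is to obtain this as a direct consequence of Theorem \ref{T2} together with the Miller--Moreno classification of minimal non-cyclic groups recorded in \cite{MM}; the odd-order hypothesis does essentially all of the work, by eliminating the even-order members of the list in Theorem \ref{T2}.

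I would first dispose of the easy implication. Suppose $G$ is minimal non-cyclic. Then by definition every proper subgroup of $G$ is cyclic, so $G$ has no proper non-cyclic subgroup at all, and the requirement that every proper non-cyclic subgroup of $G$ carry an $EF$-partition holds vacuously for the given nontrivial $F$. This half of the equivalence uses neither the odd-order hypothesis nor \cite{MM}.

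For the forward implication, assume $|G|$ is odd and that every proper non-cyclic subgroup of $G$ has an $EF$-partition. This is exactly the hypothesis of Theorem \ref{T2}, so $G$ is isomorphic to one of the four types listed there. I would then go down that list and strike out every group of even order: $Q_8$ has order $8$, while $Z_p \times Q_8$ and $Z_3 \ltimes Q_8$ have even orders $8p$ and $24$. Since $|G|$ is odd, the only survivors are $Z_p \times Z_p$ with $p$ an odd prime and the metacyclic group $\langle a, b \mid a^p = b^{q^m} = 1,\ b^{-1}ab = a^r \rangle$ with $p$ and $q$ odd (and $r \not\equiv 1$, $r^q \equiv 1 \pmod p$).

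It then remains only to identify these survivors as minimal non-cyclic groups. In $Z_p \times Z_p$ every proper subgroup has order $1$ or $p$ and so is cyclic, while the group itself is not cyclic; and the metacyclic group above is precisely one of the families in the classification of \cite{MM}. Hence $G$ is minimal non-cyclic and the equivalence follows. The only genuinely non-routine point is the matching of the two lists: the members of Theorem \ref{T2} that fail to be minimal non-cyclic, namely $Z_p \times Q_8$ and $Z_3 \ltimes Q_8$ (each containing $Q_8$ as a proper non-cyclic subgroup), together with the minimal but even group $Q_8$ itself, are exactly the candidates removed by the parity restriction, so none is wrongly kept or lost.
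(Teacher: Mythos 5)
Your proof is correct and follows exactly the route the paper intends: the corollary is stated without proof as an immediate consequence of Theorem \ref{T2}, and your argument---the backward direction holding vacuously since a minimal non-cyclic group has no proper non-cyclic subgroups, and the forward direction obtained by applying Theorem \ref{T2} and discarding the even-order entries $Q_8$, $Z_p\times Q_8$, $Z_3\ltimes Q_8$---is precisely that consequence written out. Your closing check that the odd-order survivors ($Z_p\times Z_p$ with $p$ odd and the metacyclic group with $p,q$ odd) are exactly the odd-order minimal non-cyclic groups in the classification of \cite{MM} is the only substantive point, and you handle it correctly.
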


\bibliographystyle{amsplain}

\end{document}